\newtheorem{theorem}{Theorem}
\newtheorem{lemma}[theorem]{Lemma}
\newtheorem{remark}[theorem]{Remark}
\begin{document}
\title{Combinatorial Gelfand models for semisimple diagram algebras}
\author{Volodymyr Mazorchuk}
\date{}

\begin{abstract}
We construct combinatorial (involutory) Gelfand models for the following diagram algebras
in the case when they are semi-simple: Brauer algebra, its partial analogue, walled Brauer
algebra, its partial analogue, Temperley-Lieb algebra, its partial analogue, 
walled Temperley-Lieb algebra, its partial analogue, partition algebra and its 
Temperley-Lieb analogue.
\end{abstract}
\maketitle

\section{Introduction and description of the result}\label{s1}

Given an associative algebra $A$, a {\em Gelfand model} for $A$ is an $A$-module which is isomorphic to the
multiplicity-free direct sum of all simple $A$-modules, see \cite{BGG}. If $A$ is finite dimensional and
semi-simple, then each Gelfand model for $A$ is an additive generator of the category $A\text{-}\mathrm{mod}$
and hence completely determines the representation theory of $A$ in some sense. 

Surprisingly enough, it turns out that in many cases there exists a relatively ``easy'' and ``elementary'' 
Gelfand model in comparison to, say, explicit description of all simple $A$-modules. One of the best examples
is the combinatorial Gelfand model for complex representations of the symmetric group $S_n$ defined as follows
(see \cite{APR,IRS,KV}): Let $\mathcal{I}_n$ be the set of involutions in $S_n$, that is the set of all elements
$s\in S_n$ such that $s^{2}=e$, where $e$ is the identity element. For $\pi\in S_n$ and $s\in \mathcal{I}_n$ let
$i(\pi,s)$ denote the number of pairs $(i,j)$, where $1\leq i<j\leq n$, such that 
$s(i)=j$ and $\pi(i)>\pi(j)$. Then the formal vector space $\mathbb{C}[\mathcal{I}_n]$ with basis
$\{v_{s}:s\in \mathcal{I}_n\}$ gets the structure of a Gelfand model for $S_n$ by defining the 
action of $S_n$ on $\mathbb{C}[\mathcal{I}_n]$ via
\begin{equation}\label{eq55}
\pi\cdot v_{s}:=(-1)^{i(\pi,s)}v_{\pi s\pi^{-1}}\quad\quad\text{ for all } \pi\in S_n, s\in \mathcal{I}_n.
\end{equation}
The simplicity of this definition should be compared with the complexity of the classical description of 
the individual constituents of this module, that is the classical
Specht (i.e. simple) $S_n$-modules over $\mathbb{C}$,  see e.g. \cite[Chapter~2]{Sa}. As this Gelfand model
is constructed in purely combinatorial terms on the set of involutions, it is sometimes also called 
an {\em involutary} or {\em combinatorial} Gelfand model.
Similar models can be defined for many other finite groups, in particular, for all classical Weyl groups,
see \cite{APR2,Ar,ABi,AB,Ca,CF,CF2,GO} and references therein.

The paper \cite{KM} makes a step beyond the group theory and constructs Gelfand models for various semigroup
algebras, in particular, for semigroup algebras of inverse semigroups in which all maximal subgroups are
isomorphic to direct sums of symmetric groups. This, of course, admits a straightforward generalization
to inverse semigroups for which Gelfand models are know for all maximal subgroups. The goal of the present note 
is to make another step and construct Gelfand models for various classes of diagram algebras in the semi-simple 
case.

One of these diagram algebra, for which we construct a Gelfand model, is the classical Brauer algebra
from \cite{Br}, which also seems to be one of the oldest diagram algebras. The other algebras are: the
partial analogue of the Brauer algebra, \cite{Ma1,Ma2,MM2}; the walled Brauer algebra, \cite{Tu,Ko},
and its partial analogue; the (walled) Temperley-Lieb algebra, \cite{TL}, and
its partial analogue, \cite{BH,MM2}; the partition algebra, \cite{Jo,Mar}, and its Temperley-Lieb analogue. All these 
algebras depend on a parameter, $\delta\in\mathbb{C}$, and they all are semi-simple for generic values
of this parameter. Furthermore, all these algebras share the following properties: 
\begin{itemize}
\item they have a combinatorial basis given by a set of diagrams;
\item they have an involution $\star$ defined combinatorially on this basis;
\item they have a filtration by ideals for which subquotients ``look like'' some Morita-equivalent versions
of (direct sums of) symmetric groups.
\end{itemize}

The main idea of the paper is that instead of the set of involutions one should consider the set of diagrams, 
which are self-dual with respect to $\star$. The formal vector space generated by this set admits
the natural module structure given by ``conjugation'', were the action on the right is defined using $\star$,
and in which one should take into account two types of scalars, those coming from the definition of the
diagram algebra, and those coming from the combinatorial Gelfand model for $S_n$ as adjusted in \cite{KM}.
One essential difference with \cite{KM} is that the set of $\star$-self-dual diagrams is much smaller
than the set of all involutions in all maximal subgroups which was considered in \cite{KM} (however,
for inverse semigroups, that is the setup of \cite{KM}, these two sets coincide). Our approach heavily 
exploits the combinatorics of the classical representation theory of finite semigroups, see \cite{GM,GMS}.

The main result of the paper, which gives an explicit combinatorial Gelfand model for all diagram
algebras mentioned above in the semi-simple case, is formulated and proved in Section~\ref{s3} after 
preliminaries on diagram algebras and their representations that are collected in Section~\ref{s2}.
For the Brauer algebra this Gelfand model was considered in \cite{Sch}.
\vspace{2mm}

\noindent
{\bf Acknowledgements.} The research was partially supported by the Royal Swedish Academy of Sciences
and the Swedish Research Council.

\section{Classical diagram algebras}\label{s2}

\subsection{Partitions and diagrams}\label{s2.1}

In this paper we work over $\mathbb{C}$. Fix a positive integer $n$ and consider the sets
$\underline{n}:=\{1,2,\dots,n\}$, $\underline{n}':=\{1',2',\dots,n'\}$ and $\mathbf{n}:=\underline{n}\cup
\underline{n}'$ (the union is automatically disjoint). 
Denote by $\mathtt{P}_n$ the set of all equivalence relations on $\mathbf{n}$, also
known as {\em partitions} of $\mathbf{n}$. For $\rho\in \mathtt{P}_n$ an equivalence class of
the equivalence relation $\rho$ is usually called a {\em part} of $\rho$ (which corresponds to viewing
$\rho$ as a partition). Define a binary operation 
$\circ$ on $\mathtt{P}_n$, called {\em composition}, as follows:
given $\tau,\rho\in \mathtt{P}_n$ define $\tau\circ\rho\in \mathtt{P}_n$ via the following procedure:
\begin{itemize}
\item First consider $\tau$ and $\rho$ as partitions of disjoint sets 
\begin{displaymath}
\{1_{\tau},2_{\tau},\dots, n_{\tau},1'_{\tau},2'_{\tau},\dots, n'_{\tau}\}\quad\text{ and }\quad 
\{1_{\rho},2_{\rho},\dots, n_{\rho},1'_{\rho},2'_{\rho},\dots, n'_{\rho}\}, 
\end{displaymath}
respectively.
\item Identify $i_{\tau}$ with $i'_{\rho}$ for all $i\in \underline{n}$.
\item Define $\chi$ as the minimal equivalence relation on the set 
\begin{displaymath}
\{1_{\rho},2_{\rho},\dots, n_{\rho},
1'_{\rho}=1_{\tau},2'_{\rho}=2_{\tau},\dots, n'_{\rho}=n_{\tau},
1'_{\tau},2'_{\tau},\dots, n'_{\tau}\} 
\end{displaymath}
which contains both $\tau$ and $\rho$.
\item Define $\tau\circ\rho$ as the restriction of $\chi$ to 
$\{1_{\rho},2_{\rho},\dots, n_{\rho},1'_{\tau},2'_{\tau},\dots, n'_{\tau}\}$ and identify the latter
set with $\mathbf{n}$ by removing all subscripts.
\end{itemize}
Denote also by $c(\tau,\rho)$ the number of equivalence classes of $\chi$ which are subsets of the set
$\{1'_{\rho}=1_{\tau},2'_{\rho}=2_{\tau},\dots, n'_{\rho}=n_{\tau}\}$.

A partition $\rho\in \mathtt{P}_n$ is usually depicted as a diagram as shown in Figure~\ref{fig1}.
An example of composition of two partitions is given in Figure~\ref{fig2}.
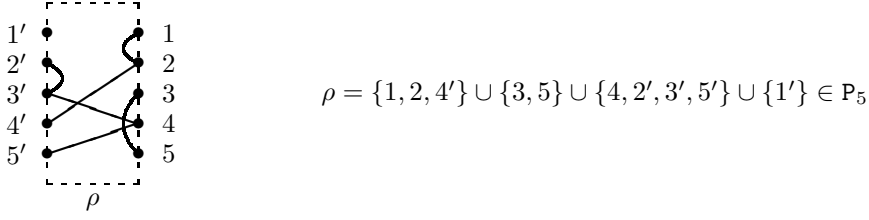
\begin{figure}
\special{em:linewidth 0.4pt} \unitlength 0.80mm
\begin{picture}(200.00,45.00)
\put(10.00,10.00){\makebox(0,0)[cc]{$\bullet$}}
\put(10.00,15.00){\makebox(0,0)[cc]{$\bullet$}}
\put(10.00,20.00){\makebox(0,0)[cc]{$\bullet$}}
\put(10.00,25.00){\makebox(0,0)[cc]{$\bullet$}}
\put(10.00,30.00){\makebox(0,0)[cc]{$\bullet$}}
\put(25.00,10.00){\makebox(0,0)[cc]{$\bullet$}}
\put(25.00,15.00){\makebox(0,0)[cc]{$\bullet$}}
\put(25.00,20.00){\makebox(0,0)[cc]{$\bullet$}}
\put(25.00,25.00){\makebox(0,0)[cc]{$\bullet$}}
\put(25.00,30.00){\makebox(0,0)[cc]{$\bullet$}}
\dashline{1}(10.00,05.00)(10.00,35.00)
\dashline{1}(25.00,35.00)(10.00,35.00)
\dashline{1}(25.00,35.00)(25.00,05.00)
\dashline{1}(10.00,05.00)(25.00,05.00)
\thicklines
\drawline(10.00,15.00)(25.00,25.00)
\drawline(10.00,10.00)(25.00,15.00)
\drawline(10.00,20.00)(25.00,15.00)
\qbezier(10,20)(15,22.50)(10,25)
\qbezier(25,25)(20,27.50)(25,30)
\qbezier(25,10)(20,15)(25,20)
\put(17.50,02.00){\makebox(0,0)[cc]{$\rho$}}
\put(100,20){\makebox(0,0)[cc]{$\rho=\{1,2,4'\}\cup\{3,5\}\cup\{4,2',3',5'\}\cup\{1'\}\in\mathtt{P}_5$}}
\put(30,10){\makebox(0,0)[cc]{$5$}}
\put(30,15){\makebox(0,0)[cc]{$4$}}
\put(30,20){\makebox(0,0)[cc]{$3$}}
\put(30,25){\makebox(0,0)[cc]{$2$}}
\put(30,30){\makebox(0,0)[cc]{$1$}}
\put(5,10){\makebox(0,0)[cc]{$5'$}}
\put(5,15){\makebox(0,0)[cc]{$4'$}}
\put(5,20){\makebox(0,0)[cc]{$3'$}}
\put(5,25){\makebox(0,0)[cc]{$2'$}}
\put(5,30){\makebox(0,0)[cc]{$1'$}}
\end{picture}
\caption{The diagram of a partition}
\label{fig1}
\end{figure}
\begin{figure}
\special{em:linewidth 0.4pt} \unitlength 0.80mm
\begin{picture}(90.00,45.00)
\put(10.00,10.00){\makebox(0,0)[cc]{$\bullet$}}
\put(10.00,15.00){\makebox(0,0)[cc]{$\bullet$}}
\put(10.00,20.00){\makebox(0,0)[cc]{$\bullet$}}
\put(10.00,25.00){\makebox(0,0)[cc]{$\bullet$}}
\put(10.00,30.00){\makebox(0,0)[cc]{$\bullet$}}
\put(25.00,10.00){\makebox(0,0)[cc]{$\bullet$}}
\put(25.00,15.00){\makebox(0,0)[cc]{$\bullet$}}
\put(25.00,20.00){\makebox(0,0)[cc]{$\bullet$}}
\put(25.00,25.00){\makebox(0,0)[cc]{$\bullet$}}
\put(25.00,30.00){\makebox(0,0)[cc]{$\bullet$}}
\put(50.00,10.00){\makebox(0,0)[cc]{$\bullet$}}
\put(50.00,15.00){\makebox(0,0)[cc]{$\bullet$}}
\put(50.00,20.00){\makebox(0,0)[cc]{$\bullet$}}
\put(50.00,25.00){\makebox(0,0)[cc]{$\bullet$}}
\put(50.00,30.00){\makebox(0,0)[cc]{$\bullet$}}
\put(35.00,10.00){\makebox(0,0)[cc]{$\bullet$}}
\put(35.00,15.00){\makebox(0,0)[cc]{$\bullet$}}
\put(35.00,20.00){\makebox(0,0)[cc]{$\bullet$}}
\put(35.00,25.00){\makebox(0,0)[cc]{$\bullet$}}
\put(35.00,30.00){\makebox(0,0)[cc]{$\bullet$}}
\put(70.00,10.00){\makebox(0,0)[cc]{$\bullet$}}
\put(70.00,15.00){\makebox(0,0)[cc]{$\bullet$}}
\put(70.00,20.00){\makebox(0,0)[cc]{$\bullet$}}
\put(70.00,25.00){\makebox(0,0)[cc]{$\bullet$}}
\put(70.00,30.00){\makebox(0,0)[cc]{$\bullet$}}
\put(85.00,10.00){\makebox(0,0)[cc]{$\bullet$}}
\put(85.00,15.00){\makebox(0,0)[cc]{$\bullet$}}
\put(85.00,20.00){\makebox(0,0)[cc]{$\bullet$}}
\put(85.00,25.00){\makebox(0,0)[cc]{$\bullet$}}
\put(85.00,30.00){\makebox(0,0)[cc]{$\bullet$}}
\dashline{1}(10.00,05.00)(10.00,35.00)
\dashline{1}(25.00,35.00)(10.00,35.00)
\dashline{1}(25.00,35.00)(25.00,05.00)
\dashline{1}(10.00,05.00)(25.00,05.00)
\dashline{1}(35.00,05.00)(35.00,35.00)
\dashline{1}(50.00,35.00)(35.00,35.00)
\dashline{1}(50.00,35.00)(50.00,05.00)
\dashline{1}(35.00,05.00)(50.00,05.00)
\dashline{1}(70.00,05.00)(70.00,35.00)
\dashline{1}(85.00,35.00)(70.00,35.00)
\dashline{1}(85.00,35.00)(85.00,05.00)
\dashline{1}(70.00,05.00)(85.00,05.00)
\thicklines
\drawline(10.00,30.00)(25.00,25.00)
\drawline(10.00,20.00)(25.00,25.00)
\drawline(10.00,10.00)(25.00,15.00)
\qbezier(10,15)(15,17.50)(10,20)
\qbezier(25,15)(20,12.50)(25,10)
\qbezier(25,20)(20,25)(25,30)
\drawline(35.00,25.00)(50.00,30.00)
\drawline(35.00,10.00)(50.00,20.00)
\qbezier(35,15)(40,20)(35,25)
\qbezier(35,20)(40,25)(35,30)
\qbezier(50,15)(45,20)(50,25)
\drawline(70.00,30.00)(85.00,30.00)
\drawline(70.00,20.00)(85.00,20.00)
\qbezier(85,15)(80,20)(85,25)
\qbezier(70,20)(75,25)(70,30)
\qbezier(70,15)(75,17.50)(70,20)
\qbezier(70,15)(75,12.50)(70,10)
\put(30,20){\makebox(0,0)[cc]{$\circ$}}
\put(60,20){\makebox(0,0)[cc]{$=$}}
\put(17.50,02.00){\makebox(0,0)[cc]{$\tau$}}
\put(42.50,02.00){\makebox(0,0)[cc]{$\rho$}}
\put(77.50,02.00){\makebox(0,0)[cc]{$\rho\circ\tau$}}
\end{picture}
\caption{Composition of partitions, $c(\tau,\rho)=1$}
\label{fig2}
\end{figure}
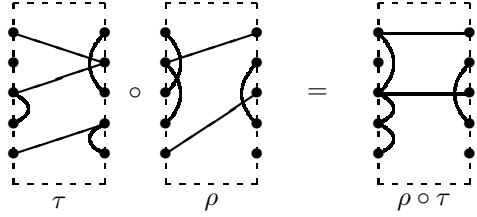

For $\rho\in\mathtt{P}_n$ the {\em rank} of $\rho$, denoted $\mathbf{r}(\rho)$, is the number
of parts in $\rho$ which intersect both $\underline{n}$ and $\underline{n}'$ non-trivially.
Parts with this property are called {\em propagating lines}. A part of $\rho$ is called a 
{\em right} or {\em left} part if it is contained in $\underline{n}$ or $\underline{n}'$, respectively.
Each part of $\rho$ is either a left or right part or a propagating line.
We have $0\leq \mathbf{r}(\rho)\leq n$.

\subsection{Diagram algebras}\label{s2.2}

For a fixed $\delta\in\mathbb{C}$ (which is always assumed to be nonzero in this paper), 
the formal linear span $\mathbf{P}_n$ of $\mathtt{P}_n$ has the natural 
structure of an associative algebra defined for the basis elements via $\tau\rho:=\delta^{c(\tau,\rho)}\tau\circ\rho$
and extended to the whole of $\mathbf{P}_n$ by bilinearity. This algebra $\mathbf{P}_n=\mathbf{P}_n(\delta)$ 
is called the {\em partition algebra}, see \cite{Jo,Mar}. Other diagram algebras are subalgebras of $\mathbf{P}_n$
constructed as linear spans of certain sets of diagrams below.

Let $\mathtt{B}_n$ and $\mathcal{P}\mathtt{B}_n$ denote the sets of all $\rho\in\mathtt{P}_n$ such that all parts
of $\rho$ have cardinality two or at most two, respectively. Let $\mathbf{B}_n$ and $\mathcal{P}\mathbf{B}_n$ be
the linear spans of $\mathtt{B}_n$ and $\mathcal{P}\mathtt{B}_n$ inside $\mathbf{P}_n$, respectively. It is
easy to see that  both $\mathbf{B}_n$ and $\mathcal{P}\mathbf{B}_n$ are subalgebras of $\mathbf{P}_n$ called
the {\em Brauer algebra} (see \cite{Br}) and the {\em partial Brauer algebra} (see \cite{Ma1,Ma2,MM2}), respectively.
Note that, if $\rho\in \mathtt{B}_n$, then $n$ and $\mathbf{r}(\rho)$ have the  same parity. 

Next, let $\mathcal{Q}\mathtt{B}_n$, $\mathcal{QP}\mathtt{B}_n$ and $\mathcal{Q}\mathtt{P}_n$ denote the sets
of planar diagrams in $\mathtt{B}_n$, $\mathcal{P}\mathtt{B}_n$ and $\mathtt{P}_n$, respectively. The linear 
spans $\mathbf{TL}_n$, $\mathcal{P}\mathbf{TL}_n$ and $\mathbf{PTL}_n$ of 
$\mathcal{Q}\mathtt{B}_n$, $\mathcal{QP}\mathtt{B}_n$ and $\mathcal{Q}\mathtt{P}_n$, respectively, 
are also subalgebras of $\mathbf{P}_n$ called the {\em Temperley-Lieb algebra} (see \cite{TL}), 
{\em partial Temperley-Lieb algebra} (see \cite{MM1}, also called {\em Motzkin algebra} in 
\cite{BH}) and {\em Temperley-Lieb-partition algebra} (see \cite{Mar2}), respectively.

Finally, let $a$ and $b$ be two positive integers such that $a+b=n$. Denote by $\mathcal{W}_{a,b}\mathtt{B}_n$,
$\mathcal{W}_{a,b}\mathcal{P}\mathtt{B}_n$, $\mathcal{W}_{a,b}\mathcal{Q}\mathtt{B}_n$ and 
$\mathcal{W}_{a,b}\mathcal{QP}\mathtt{B}_n$ the subsets of $\mathtt{B}_n$, $\mathcal{P}\mathtt{B}_n$,
$\mathcal{Q}\mathtt{B}_n$ and $\mathcal{QP}\mathtt{B}_n$, respectively, consisting of all diagrams which
satisfy the following conditions:
\begin{itemize}
\item each propagating line is either contained in $\mathbf{a}$ or in $\mathbf{n}\setminus\mathbf{a}$;
\item each non-singleton right part intersects both $\underline{a}$ and $\underline{n}\setminus\underline{a}$;
\item each non-singleton left part intersects both $\underline{a}'$ and $\underline{n}'\setminus\underline{a}'$.
\end{itemize}
The linear 
spans $\mathcal{W}_{a,b}\mathbf{B}_n$, $\mathcal{W}_{a,b}\mathcal{P}\mathbf{B}_n$, 
$\mathcal{W}_{a,b}\mathbf{TL}_n$ and $\mathcal{W}_{a,b}\mathcal{P}\mathbf{TL}_n$ of $\mathcal{W}_{a,b}\mathtt{B}_n$,
$\mathcal{W}_{a,b}\mathcal{P}\mathtt{B}_n$, $\mathcal{W}_{a,b}\mathcal{Q}\mathtt{B}_n$ and 
$\mathcal{W}_{a,b}\mathcal{QP}\mathtt{B}_n$, respectively, are subalgebras of $\mathbf{P}_n$ called the
{\em walled Brauer algebra} (see \cite{Tu,Ko}), its partial analogue, {\em walled Temperley-Lieb algebra}
and its partial analogue, respectively.

The algebra $\mathbf{P}_n$ has a natural anti-involution $\star$ given by swapping $\underline{n}$
and $\underline{n}'$. Diagrammatically this corresponds to reflection of a diagram with respect to the vertical
axes. This anti-involution restricts to all subalgebras introduced above.

\subsection{Specht modules}\label{s2.3}

Let $\mathbf{A}$ denote one of the algebras $\mathbf{P}_n$, $\mathbf{B}_n$, 
$\mathcal{P}\mathbf{B}_n$, $\mathbf{TL}_n$, $\mathcal{P}\mathbf{TL}_n$, $\mathbf{PTL}_n$, 
$\mathcal{W}_{a,b}\mathbf{B}_n$, $\mathcal{W}_{a,b}\mathcal{P}\mathbf{B}_n$ 
$\mathcal{W}_{a,b}\mathbf{TL}_n$ or $\mathcal{W}_{a,b}\mathcal{P}\mathbf{TL}_n$.
Denote by $\mathtt{A}$ the diagram basis of $\mathbf{A}$. 

Two elements $\tau,\rho\in\mathtt{A}$ are called
\begin{itemize}
\item {\em left equivalent}, denoted $\tau\sim_{L}\rho$, provided that 
their restrictions to $\underline{n}$ coincide and they have the same right parts;
\item {\em right equivalent}, denoted $\tau\sim_{R}\rho$, provided that their restrictions to $\underline{n}'$ 
coincide and they have the same left parts.
\end{itemize}
We have $\tau\sim_{L}\rho$ if and only if $\mathbf{A}\tau=\mathbf{A}\rho$. Similarly,
$\tau\sim_{R}\rho$ if and only if $\tau\mathbf{A}=\rho\mathbf{A}$. For $\tau,\rho\in\mathtt{A}$
write $\tau\sim_{J}\rho$ provided that $\mathbf{A}\tau\mathbf{A}=\mathbf{A}\rho\mathbf{A}$.

For $\rho\in \mathcal{W}_{a,b}\mathcal{P}\mathtt{B}_n$ denote by
$\mathbf{r}_a(\rho)$ the number of propagating lines of $\rho$ contained in $\mathbf{n}_a$,
the so-called {\em $a$-rank} of $\rho$. The following lemma is straightforward and left to the reader:

\begin{lemma}\label{lem351}
\hspace{1mm}

\begin{enumerate}[$($a$)$]
\item\label{lem351.1} If $\mathbf{A}\neq \mathcal{W}_{a,b}\mathcal{P}\mathbf{B}_n,\mathcal{W}_{a,b}\mathcal{P}\mathbf{TL}_n$,
then $\tau\sim_{J}\rho$ is equivalent to $\mathbf{r}(\tau)=\mathbf{r}(\rho)$.
\item\label{lem351.2} If $\mathbf{A}=\mathcal{W}_{a,b}\mathcal{P}\mathbf{B}_n$ or $\mathbf{A}=\mathcal{W}_{a,b}\mathcal{P}\mathbf{TL}_n$,
then $\tau\sim_{J}\rho$ is equivalent to $\mathbf{r}(\tau)=\mathbf{r}(\rho)$
and $\mathbf{r}_a(\tau)=\mathbf{r}_a(\rho)$.
\end{enumerate}
\end{lemma}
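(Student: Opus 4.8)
The plan is to recognise $\sim_J$ as Green's $\J$-relation on the diagram monoid $\mathtt{A}$ and to pin down its classes by rank. The first step is to note that, since every product of two basis diagrams satisfies $\alpha\rho=\delta^{c(\alpha,\rho)}\,\alpha\circ\rho$ with $\delta^{c}\neq 0$, the two-sided ideal $\mathbf{A}\tau\mathbf{A}$ has as a basis exactly the set of diagrams $\alpha\circ\tau\circ\beta$ with $\alpha,\beta\in\mathtt{A}$. Hence $\tau\sim_J\rho$ holds if and only if $\tau$ and $\rho$ lie in the same two-sided $\circ$-ideal of the monoid $\mathtt{A}$, which reduces the statement to a purely combinatorial claim about composition of diagrams.

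The second ingredient is \emph{monotonicity} of rank under composition: for all diagrams one has $\mathbf{r}(\alpha\circ\tau)\leq\min\{\mathbf{r}(\alpha),\mathbf{r}(\tau)\}$. Indeed, a propagating line of $\alpha\circ\tau$ arises from a chain of parts running from $\underline{n}$ to $\underline{n}'$ through the identified middle row, and such a chain must pass through a propagating line of each factor; distinct propagating lines of the product consume distinct propagating lines of $\tau$ (and of $\alpha$), which gives the inequality. In the walled partial case the wall conditions confine every propagating line to a single block, so the identical argument applied inside $\mathbf{n}_a$ yields $\mathbf{r}_a(\alpha\circ\tau)\leq\min\{\mathbf{r}_a(\alpha),\mathbf{r}_a(\tau)\}$ as well.

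The forward implication is then immediate: if $\tau\sim_J\rho$ then, by the first step, $\tau=\alpha\circ\rho\circ\beta$ for suitable $\alpha,\beta\in\mathtt{A}$, so monotonicity gives $\mathbf{r}(\tau)\leq\mathbf{r}(\rho)$, and symmetry forces equality; in the walled partial case the same argument simultaneously gives $\mathbf{r}_a(\tau)=\mathbf{r}_a(\rho)$. For the converse I would show that the listed invariants are complete. Fixing $k=\mathbf{r}(\tau)=\mathbf{r}(\rho)$, the idea is to factor each diagram through a standard rank-$k$ diagram $e_k$ (for the walled case, one of prescribed bi-rank $(\mathbf{r}_a,\mathbf{r}-\mathbf{r}_a)$), writing $\rho=\rho_L\circ e_k\circ\rho_R$ and $\tau=\tau_L\circ e_k\circ\tau_R$, where $\rho_L,\tau_L$ record the part structure on $\underline{n}$ together with the matching of propagating endpoints and $\rho_R,\tau_R$ record the analogous data on $\underline{n}'$. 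Composing $\rho$ on the left with a multiplier built from $\tau_L$ and the reflection $\rho_L^{\star}$, and symmetrically on the right, then transforms $\rho$ into $\tau$, proving $\tau\in\mathbf{A}\rho\mathbf{A}$; symmetry gives the reverse containment.

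The main obstacle is this explicit factorisation for the constrained families. For the Temperley--Lieb--type algebras one cannot permute propagating lines, so the connecting multipliers must themselves be chosen planar; here one uses that the required rearrangement of cups and caps is realisable by planar diagrams of the same rank, so the $\J$-classes of the planar monoids remain governed by rank alone. For the walled partial algebras one must verify that no multiplier can move a propagating line across the wall --- this is precisely what the wall conditions guarantee, and it is the reason a second invariant $\mathbf{r}_a$ enters part~(b); the bounded-part-size condition of the partial algebras is preserved automatically, since composition of diagrams with parts of size at most two again produces parts of size at most two. Checking that each standard form $e_k$ and each multiplier actually lies in the relevant subalgebra $\mathbf{A}$ is the routine but somewhat tedious case analysis that justifies calling the lemma straightforward.
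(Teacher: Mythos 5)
The paper offers no proof to compare against here: Lemma~\ref{lem351} is explicitly declared straightforward and left to the reader, so your proposal has to be judged on its own merits. Its skeleton is the intended standard argument: since $\delta\neq 0$, the ideal $\mathbf{A}\tau\mathbf{A}$ is the span of $\{\alpha\circ\tau\circ\beta:\alpha,\beta\in\mathtt{A}\}$, so $\sim_J$ is Green's $\J$-relation on the diagram monoid; rank monotonicity $\mathbf{r}(\alpha\circ\tau)\leq\min\{\mathbf{r}(\alpha),\mathbf{r}(\tau)\}$ gives the forward implication; and factoring each rank-$k$ diagram through a standard element $e_k$ (with planar multipliers in the Temperley--Lieb cases) gives the converse. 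All of this is correct and at an appropriate level of detail for a lemma of this kind.

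There is, however, one concrete gap, in part~(a) as it applies to the walled (non-partial) algebras $\mathcal{W}_{a,b}\mathbf{B}_n$ and $\mathcal{W}_{a,b}\mathbf{TL}_n$. Your monotonicity argument for $\mathbf{r}_a$ applies verbatim to these two families, not only to the partial walled ones: propagating lines preserve the side of the wall, non-propagating arcs flip it, and a bottom-to-top chain uses such flips in pairs, so $\mathbf{r}_a(\alpha\circ\tau)\leq\min\{\mathbf{r}_a(\alpha),\mathbf{r}_a(\tau)\}$ and $\mathbf{r}_a$ is a $\J$-invariant in all four walled cases. Consequently your factorization through an $e_k$ of ``prescribed bi-rank'' can only ever connect diagrams of equal bi-rank, and as written you have proved the (b)-type statement for all walled families --- which contradicts part~(a) unless you add the counting observation that in $\mathcal{W}_{a,b}\mathbf{B}_n$ and $\mathcal{W}_{a,b}\mathbf{TL}_n$ the bi-rank is determined by the rank: every part has size exactly two, so every non-propagating bottom part crosses the wall, whence $a=\mathbf{r}_a(\tau)+m$ and $b=\bigl(\mathbf{r}(\tau)-\mathbf{r}_a(\tau)\bigr)+m$ with $m$ the number of bottom arcs, giving $\mathbf{r}_a(\tau)=\bigl(\mathbf{r}(\tau)+a-b\bigr)/2$. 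This also corrects your stated explanation for the dichotomy between (a) and (b): it is not the wall conditions as such (they hold in all four walled families) but the admissibility of singleton parts in the partial walled algebras that makes $\mathbf{r}_a$ an independent invariant there. With this remark inserted, your proof is complete.
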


Clearly, both relations $\sim_{L}$ and $\sim_{R}$ are subsets of the relation $\sim_{J}$.
Furthermore, $\sim_{J}$ is the minimal equivalence relation containing both $\sim_{L}$ and $\sim_{R}$,
see e.g. \cite[Section~4.4]{GM}. Denote by $\mathrm{spec}(\mathbf{A})$ the set of $J$-equivalence classes of diagrams.

Let $\mathcal{L}$ be a left equivalence class in $\mathtt{A}$ and let $\mathcal{J}\in\mathrm{spec}(\mathbf{A})$ 
be the $J$-class containing $\mathcal{L}$. Then the formal linear span $\mathbb{C}[\mathcal{L}]$, considered
as a subquotient of the left regular $\mathbf{A}$-module, inherits the natural structure of an 
$\mathbf{A}$-module given, for $\tau\in \mathtt{A}$  and $\rho\in \mathcal{L}$, by 
\begin{displaymath}
\tau\cdot\rho:=
\begin{cases}
\delta^{c(\tau,\rho)}\tau\circ\rho,& \tau\circ\rho\in \mathcal{L};\\
0,& \text{otherwise}.
\end{cases}
\end{displaymath}
The module $\mathbb{C}[\mathcal{L}]$ does not depend, up to isomorphism, on the choice of 
$\mathcal{L}\subset \mathcal{J}$ (this is similar to e.g. \cite[Proposition~9]{MM2}). 

Note that all elements of $\mathcal{L}$ have both the same rank and the same $a$-rank (the latter in the
case of walled algebras), which we denote by $\mathbf{r}(\mathcal{L})$ and $\mathbf{r}_a(\mathcal{L})$, respectively.
Consider the group
\begin{displaymath}
G=G_{\mathcal{J}}:=
\begin{cases}
S_{\mathbf{r}_a(\mathcal{L})}\oplus S_{\mathbf{r}(\mathcal{L})-\mathbf{r}_a(\mathcal{L})},& 
\mathbf{A}\in\{\mathcal{W}_{a,b}\mathbf{B}_n,\mathcal{W}_{a,b}\mathcal{P}\mathbf{B}_n\};\\
S_{\mathbf{r}(\mathcal{L})},& \mathbf{A}\in\{\mathbf{P}_n,\mathbf{B}_n,\mathcal{P}\mathbf{B}_n\};\\
\{e\},& \text{otherwise}.
\end{cases}
\end{displaymath}
There is an obvious right action of $G$ on the $\mathbf{A}$-module $\mathbb{C}[\mathcal{L}]$ 
by automorphisms which permute the
right ends of the propagating lines. This makes $\mathbb{C}[\mathcal{L}]$ into an 
$\mathbf{A}\text{-}\mathbb{C}[G]$-bimodule which is free over $\mathbb{C}[G]$. The rank of the latter
free module is the number $q_{\mathcal{J}}$ of right equivalence classes in $\mathcal{J}$
(which is also equal to the number of left equivalence classes because of $\star$). 
If $N$ is a simple $G$-module, we can consider the corresponding {\em Specht} $A$-module 
$\Delta_{\mathcal{L}}(N):=\mathbb{C}[\mathcal{L}]\otimes_{\mathbb{C}[G]}N$.
Note that such simple modules $N$ are described by 
\begin{itemize}
\item pairs $(\lambda_1,\lambda_2)$ of partitions $\lambda_1\vdash \mathbf{r}_a(\mathcal{L})$
and $\lambda_2\vdash \mathbf{r}(\mathcal{L})-\mathbf{r}_a(\mathcal{L})$ in the case
$\mathbf{A}\in\{\mathcal{W}_{a,b}\mathbf{B}_n,\mathcal{W}_{a,b}\mathcal{P}\mathbf{B}_n\}$;
\item partitions $\lambda\vdash \mathbf{r}(\mathcal{L})$ in the case
$\mathbf{A}\in\{\mathbf{P}_n,\mathbf{B}_n,\mathcal{P}\mathbf{B}_n\}$;
\item the trivial partition $\lambda\vdash 1$ in all Temperley-Lieb cases.
\end{itemize}
We have $\dim \Delta_{\mathcal{L}}(N)=q_{\mathcal{J}}\dim N$.
For each $\mathcal{J}\in\mathrm{spec}(\mathbf{A})$ fix 
some left equivalence class $\mathcal{L}_{\mathcal{J}}$ contained in $\mathcal{J}$.

\begin{theorem}\label{thm1}
\hspace{1mm}

\begin{enumerate}[$($a$)$]
\item\label{thm1.1} The module $\Delta_{\mathcal{L}_{\mathcal{J}}}(N)$ 
is simple for all but finitely many values of $\delta$. 
\item\label{thm1.2} $\mathbf{A}$ is semi-simple if and only if all $\Delta_{\mathcal{L}_{\mathcal{J}}}(N)$ are simple.
\item\label{thm1.3} If $\mathbf{A}$ is semi-simple, then the set of all $\Delta_{\mathcal{L}_{\mathcal{J}}}(N)$, 
where ${\mathcal{J}}\in\mathrm{spec}(\mathbf{A})$ and $N$ runs through the set of isoclasses of simple 
$G_{\mathcal{J}}$-modules, forms a complete and irredundant set of pairwise non-isomorphic 
simple $\mathbf{A}$-modules.
\end{enumerate}
\end{theorem}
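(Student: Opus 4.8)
The plan is to treat all ten algebras uniformly by exploiting the fact that in each case the diagram basis $\mathtt{A}$ is a finite semigroup under the composition $\circ$ and that $\mathbf{A}$ is the twisted semigroup algebra of $(\mathtt{A},\circ)$, with the twist given by the scalars $\delta^{c(\cdot,\cdot)}$; the statement is then an instance of Clifford--Munn--Ponizovskii theory for such algebras in the spirit of \cite{GM,GMS,MM2}. First I would record the ideal filtration. Since a propagating line of $\tau\circ\rho$ arises from propagating lines of both $\tau$ and $\rho$, we have $\mathbf{r}(\tau\circ\rho)\leq\min\{\mathbf{r}(\tau),\mathbf{r}(\rho)\}$ (and the same for $a$-ranks); ordering $\mathrm{spec}(\mathbf{A})$ by rank, refined by $a$-rank in the walled cases via Lemma~\ref{lem351}, and passing to a linear extension, the spans of the resulting down-sets form a chain of two-sided ideals whose successive subquotients are spanned by the diagrams of a single $J$-class $\mathcal{J}$. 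Factoring each diagram of $\mathcal{J}$ through its right part, a group element $g\in G_{\mathcal{J}}$ permuting the propagating lines, and its left part, and using that $\star$ identifies the left and right equivalence classes (so that their common number is $q_{\mathcal{J}}$), I would identify this subquotient, as an $\mathbf{A}$-$\mathbf{A}$-bimodule, with a Rees-matrix piece over $\mathbb{C}[G_{\mathcal{J}}]$, recovering the bimodule $\mathbb{C}[\mathcal{L}]$ of the excerpt together with its asserted free $\mathbb{C}[G_{\mathcal{J}}]$-action. This reduces everything to the study of $\Delta_{\mathcal{L}}(N)$ and of the \emph{sandwich matrix} $P_{\mathcal{J}}$ recording the products of left-class and right-class representatives inside $\mathcal{J}$, whose entries have the form $\delta^{c}g$ with $g\in G_{\mathcal{J}}$, or are $0$ when the product drops rank.

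For part \eqref{thm1.1} I would equip $\Delta_{\mathcal{L}}(N)=\mathbb{C}[\mathcal{L}]\otimes_{\mathbb{C}[G_{\mathcal{J}}]}N$ with the $\mathbb{C}$-valued bilinear form $\phi_{N}$ obtained by pushing $P_{\mathcal{J}}$ through an irreducible representation $\rho_{N}$ realizing $N$. By the standard argument, $\mathrm{rad}\,\phi_{N}$ is a submodule, the quotient $\Delta_{\mathcal{L}}(N)/\mathrm{rad}\,\phi_{N}$ is simple or zero, and $\Delta_{\mathcal{L}}(N)$ is simple precisely when $\phi_{N}$ is nondegenerate. The Gram matrix $\Phi_{N}(\delta)$ of $\phi_{N}$ has entries that are polynomials in $\delta$, so it remains to prove that $\det\Phi_{N}(\delta)$ is not the zero polynomial; this is the main obstacle and the only genuinely computational point. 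I expect to establish it by a leading-term analysis in $\delta$: pairing a half-diagram with its own $\star$-image closes every non-propagating part into a loop and returns the propagating lines in the identity permutation, giving a diagonal entry $\delta^{c_{0}}e$, where $c_{0}$ is the rank-dependent (hence constant within $\mathcal{J}$) number of non-propagating parts; pairing two \emph{distinct} half-diagrams closes up strictly fewer parts, so the corresponding entry is either $0$ or $\delta^{c}g$ with $c<c_{0}$. Hence $\delta^{-c_{0}}\Phi_{N}(\delta)$ tends to the identity matrix as $\delta\to\infty$, so $\det\Phi_{N}(\delta)$ has nonzero leading coefficient and vanishes for only finitely many $\delta$; off this finite set $\Delta_{\mathcal{L}}(N)$ is simple.

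With \eqref{thm1.1} in hand, parts \eqref{thm1.2} and \eqref{thm1.3} become formal and require no further estimates. Since $\phi_{N}(v,v)=\delta^{c_{0}}\cdot\mathrm{Id}\neq 0$ for $\delta\neq 0$, the form $\phi_{N}$ is always nonzero, so every $\Delta_{\mathcal{L}}(N)$ has a simple top $L_{\mathcal{J},N}:=\Delta_{\mathcal{L}}(N)/\mathrm{rad}\,\phi_{N}$, and these tops are pairwise non-isomorphic: distinct $\mathcal{J}$ are separated because the ideal of strictly lower $J$-classes annihilates $\Delta_{\mathcal{L}}(N)$ while $\mathcal{J}$ does not, so the modules have distinct apexes, while distinct $N$ for a fixed $\mathcal{J}$ are separated because the top nonzero layer recovers $N$ as a $G_{\mathcal{J}}$-module. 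By the general theory this family exhausts the simple $\mathbf{A}$-modules. For \eqref{thm1.2}, if $\mathbf{A}$ is semi-simple then each $\Delta_{\mathcal{L}}(N)$ is a semisimple module with simple top, hence simple; conversely, if all $\Delta_{\mathcal{L}}(N)$ are simple then each equals $L_{\mathcal{J},N}$, and the dimension count
\begin{displaymath}
\sum_{\mathcal{J}}\sum_{N}\bigl(\dim\Delta_{\mathcal{L}}(N)\bigr)^{2}
=\sum_{\mathcal{J}}q_{\mathcal{J}}^{2}\sum_{N}(\dim N)^{2}
=\sum_{\mathcal{J}}q_{\mathcal{J}}^{2}\,|G_{\mathcal{J}}|
=\sum_{\mathcal{J}}|\mathcal{J}|=\dim\mathbf{A},
\end{displaymath}
which uses $\dim\Delta_{\mathcal{L}}(N)=q_{\mathcal{J}}\dim N$, the identity $\sum_{N}(\dim N)^{2}=|G_{\mathcal{J}}|$ for the group algebra, and the $\star$-symmetric egg-box identity $|\mathcal{J}|=q_{\mathcal{J}}^{2}|G_{\mathcal{J}}|$, together with Artin--Wedderburn forces the Jacobson radical of $\mathbf{A}$ to vanish. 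This gives \eqref{thm1.2}, and \eqref{thm1.3} is the specialization of the completeness and irredundancy just established to the semi-simple case.
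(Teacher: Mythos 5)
Your route is genuinely different from the paper's: the paper deduces parts (b) and (c) abstractly, from the Specht filtration of the regular module, quasi-heredity of $\mathbf{A}$ (using $\delta\neq 0$), the simple-preserving duality induced by $\star$, and BGG reciprocity, while part (a) is not proved there at all but delegated to the literature case by case. You instead give a self-contained Clifford--Munn--Ponizovskii/cellular argument whose only computational input is the Gram-determinant estimate. Unfortunately, that estimate -- the step you yourself call ``the only genuinely computational point'' -- contains a genuine error for six of the ten algebras. You assert that the number $c_0$ of non-propagating parts of a half-diagram is ``rank-dependent (hence constant within $\mathcal{J}$)''. This holds for $\mathbf{B}_n$, $\mathbf{TL}_n$ and the walled (non-partial) cases, where every part has size exactly two and $c_0=(n-k)/2$; but for $\mathcal{P}\mathbf{B}_n$, $\mathcal{P}\mathbf{TL}_n$, $\mathbf{P}_n$, $\mathbf{PTL}_n$ and the partial walled algebras, half-diagrams of equal rank can have different numbers of non-propagating parts (singletons versus arcs, small blocks versus large blocks). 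Consequently the diagonal entries of $\Phi_N(\delta)$ are $\delta^{c_d}$ with $d$-dependent exponents, and your strict inequality ``$c<c_0$'' for off-diagonal entries is simply false. Concretely, for $\mathcal{P}\mathbf{B}_2$ (equally, the Motzkin algebra) at rank $0$, the two half-diagrams are the arc $\{1,2\}$ and the two singletons $\{1\},\{2\}$, and the Gram matrix is
\begin{displaymath}
\Phi(\delta)=\begin{pmatrix}\delta & \delta\\ \delta & \delta^{2}\end{pmatrix},
\end{displaymath}
in which the off-diagonal entries have the \emph{same} $\delta$-degree as the first diagonal entry; no constant rescaling $\delta^{-c_0}\Phi(\delta)$ tends to the identity, so your limiting argument as stated breaks down exactly where the paper's citations to \cite{MM2} and \cite{Mar2} would have to be replaced by actual work.

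The defect is repairable inside your framework, so I would call this a fixable gap rather than a wrong approach. The correct estimate is: for half-diagrams $d_1,d_2$ of the same rank with nonzero pairing, the number $c(d_1,d_2)$ of closed middle components satisfies $c(d_1,d_2)\leq\min\{c(d_1,d_1),c(d_2,d_2)\}$, because every closed component of the join of the two middle partitions is a union of non-propagating parts of each side and cannot contain a propagating one; and equality $c(d_1,d_2)=\tfrac{1}{2}\bigl(c(d_1,d_1)+c(d_2,d_2)\bigr)$ forces each closed component to consist of exactly one non-propagating part of each side (hence these coincide), after which rank preservation forces the propagating parts to coincide as well, i.e. $d_1=d_2$. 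Conjugating by $D=\mathrm{diag}\bigl(\delta^{-c_d/2}\bigr)$ then gives $D\Phi_N(\delta)D\to I$ as $\delta\to+\infty$, so $\det\Phi_N$ is again a nonzero polynomial and your part (a) -- and with it your (correct and standard) derivation of (b) and (c) via the apex/top analysis, the identity $|\mathcal{J}|=q_{\mathcal{J}}^2|G_{\mathcal{J}}|$ and Artin--Wedderburn -- goes through. Note that in the example above $\det\Phi(\delta)=\delta^{2}(\delta-1)$, which shows that your strict-degree claim fails as a statement, not merely in its proof (the form genuinely degenerates at $\delta=1$). Your remaining steps match known semigroup-theoretic arguments in the spirit of \cite{GM,GMS,MM2} and, modulo the repair, are sound; in the partial walled cases just make sure the layer filtration is taken with respect to both rank and $a$-rank, as in Lemma~\ref{lem351}, which you do indicate.
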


\subsection{Sketch of the proof of Theorem~\ref{thm1}}\label{s2.4}

In most cases, Theorem~\ref{thm1} can be found, sometimes disguised, in the existing literature.
In other cases it is proved using the same arguments as in the known cases. Here we sketch the
standard approach to its proof.

The fact that $\mathtt{A}$ is a union of left equivalence classes means that the left regular 
representation of $\mathbf{A}$ has a filtration whose subquotients are isomorphic to Specht modules.
The endomorphism algebra of $\Delta_{\mathcal{L}}(N)$ is isomorphic to the endomorphism algebra of 
the $G$-module $N$ which, in turn, is isomorphic to $\mathbb{C}$. This implies that 
$\mathtt{A}$ is quasi-hereditary (note that we assume $\delta\neq 0$), with Specht modules being the
standard modules for the quasi-hereditary structure. Moreover, $\mathtt{A}$ has a simple preserving
duality induced by $\star$. Now from the BGG reciprocity it follows that $\mathtt{A}$ is semi-simple if
and only if all $\Delta_{\mathcal{L}}(N)$ are simple (which is claim \eqref{thm1.2}).
Since Specht modules are standard modules, claim \eqref{thm1.2} implies claim \eqref{thm1.3}.

It remains to address claim \eqref{thm1.1}. For the Temperley-Lieb algebra, partition algebra and its 
Temperley-Lieb analogue this claim can be found in \cite{Mar2}.  For the Brauer algebra, see \cite{Ru} 
and references therein. For the walled Brauer algebra, see \cite{Ha,Ni}, and the same argument works 
for the walled Temperley-Lieb algebra. For the partial Brauer algebra, see \cite{MM2}. The argument 
from \cite{MM2} restricts to all other partial algebras as well (i.e. for partial Temperley-Lieb algebras, 
partial walled Brauer algebras and partial walled Temperley-Lieb algebras). This completes the proof.

\section{The main result}\label{s3}

\subsection{The model}\label{s3.1}

Let $\mathbf{A}$ denote one of the algebras $\mathbf{P}_n$, $\mathbf{B}_n$, 
$\mathcal{P}\mathbf{B}_n$, $\mathbf{TL}_n$, $\mathcal{P}\mathbf{TL}_n$,  $\mathbf{PTL}_n$, 
$\mathcal{W}_{a,b}\mathbf{B}_n$, $\mathcal{W}_{a,b}\mathcal{P}\mathbf{B}_n$ 
$\mathcal{W}_{a,b}\mathbf{TL}_n$ and $\mathcal{W}_{a,b}\mathcal{P}\mathbf{TL}_n$.
Denote by $\mathtt{A}$ the diagram basis of $\mathbf{A}$ and 
let $\mathcal{I}$ denote the set of all $\star$-self-dual elements in $\mathtt{A}$.

Let $\tau\in \mathtt{A}$ be a diagram of rank $k\in\{0,1,\dots,n\}$. We associate to $\tau$ an
element $\pi_{\tau}\in S_k$ in the following way: Let $A_1,A_2,\dots,A_k$ be the list of propagating
lines in $\tau$. For $i\in\{1,2,\dots,k\}$ let $B_i$ and $C_i$ denote the intersections of $A_i$ with
$\underline{n}$ and $\underline{n}'$, respectively. We assume that for all $1\leq i < j \leq k$
we have $\min\{s\in B_i\}<\min\{s\in B_j\}$. We now define $\pi_{\tau}\in S_k$ as the unique 
permutation such that for all $1\leq i < j \leq k$
we have $\min\{s\in C_{\pi_{\tau}(i)}\}<\min\{s\in C_{\pi_{\tau}(j)}\}$. For example, for the element
$\rho$ in Figure~\ref{fig1} we have $k=2$, $A_1=\{1,2,4'\}$, $A_2=\{4,2',3',5'\}$, $B_1=\{1,2\}$,
$B_2=\{4\}$, $C_1=\{4'\}$, $C_2=\{2',3',5'\}$ and hence $\pi_{\rho}\in S_2$ is the transposition 
swapping $1$ and $2$. For the element $\tau$ in Figure~\ref{fig2} we similarly get that 
$\pi_{\tau}\in S_2$ is the identity map. For $\iota\in \mathcal{I}$ we note that  
$\{i,j'\}$ belongs to a propagating line if and only if $\{i',j\}$ does. This implies that for
$\iota\in \mathcal{I}$ the element $\pi_{\iota}\in S_2$ is an involution. 
If $\mathbf{A}$ is one of the walled algebras, then, by construction, the element 
$\pi_{\tau}$ belongs to  $S_{\mathbf{r}_a(\tau)}\oplus S_{k-\mathbf{r}_a(\tau)}$.

Consider the formal span $\mathbb{C}[\mathcal{I}]$ with basis
$\{v_{\iota}:\iota\in \mathcal{I}\}$ and for $\tau\in \mathtt{A}$ and $\iota\in \mathcal{I}$ set
\begin{equation}\label{eq1}
\tau\cdot v_{\iota}:=
\begin{cases}
(-1)^{i(\pi_{\tau\circ \iota}\pi_{\iota},\pi_{\iota})}\delta^{c(\tau,\iota)}v_{\tau\circ\iota\circ\tau^{\star}},
& \mathbf{r}(\tau\circ\iota)=\mathbf{r}(\iota);\\
0,& \text{otherwise}.
\end{cases} 
\end{equation}
Here $i(\pi_{\tau\circ\iota}\pi_{\iota},\pi_{\iota})$ is defined as in Section~\ref{s1}.
Note that $\mathbf{r}(\tau\circ\iota)=\mathbf{r}(\iota)$ also implies
$\mathbf{r}(\tau\circ\iota\circ\tau^{\star})=\mathbf{r}(\iota)$ by symmetry.
The explanation for the ``complicated-looking'' element $\pi_{\tau\circ\iota}\pi_{\iota}$ is the following: 
for the formula to make sense we need an element ``just like $\tau$ but of the same rank as $\iota$'' which would
act on propagating lines of $\iota$, the  easiest way to produce such an element is to multiply 
$\tau$ with $\iota$, however, $\iota$ is not idempotent and hence
one has to compensate by multiplying with $\pi_{\iota}=\pi_{\iota}^{-1}$ (recall that $\pi_{\iota}$ is an involution).
We can now state our main result:

\begin{theorem}\label{tmain}
If $\mathbf{A}$ is semi-simple, then formula \eqref{eq1} defines on $\mathbb{C}[\mathcal{I}]$ 
the structure of a Gelfand model for $\mathbf{A}$.
\end{theorem}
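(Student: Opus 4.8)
The plan is to analyze $\mathbb{C}[\mathcal{I}]$ one $J$-class at a time and, on each block, to reduce the whole statement to the classical combinatorial model \eqref{eq55} for $S_n$ (and, in the walled case, for a product of two symmetric groups) as adjusted in \cite{KM}. I begin with two elementary remarks about \eqref{eq1}. Composition never raises the rank, so $\mathbf{r}(\tau\circ\iota)\le\mathbf{r}(\iota)$, and the nonzero branch of \eqref{eq1} occurs exactly when this is an equality. When equality holds, $\tau\circ\iota\circ\tau^{\star}$ is again $\star$-self-dual (because $\iota$ is) and has the same rank, and the same $a$-rank in the walled case, as $\iota$; hence by Lemma~\ref{lem351} it lies in the same $J$-class as $\iota$. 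Thus \eqref{eq1} sends each basis vector to $0$ or to a basis vector of the same $J$-class, which gives a vector space decomposition $\mathbb{C}[\mathcal{I}]=\bigoplus_{\mathcal{J}}\mathbb{C}[\mathcal{I}_{\mathcal{J}}]$ with $\mathcal{I}_{\mathcal{J}}:=\mathcal{I}\cap\mathcal{J}$, and it suffices to treat each summand separately.

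Fix $\mathcal{J}\in\mathrm{spec}(\mathbf{A})$, put $G=G_{\mathcal{J}}$, and fix a $\mathbb{C}[G]$-basis $\rho_{1},\dots,\rho_{q_{\mathcal{J}}}$ of the bimodule $\mathbb{C}[\mathcal{L}_{\mathcal{J}}]$, one diagram per right equivalence class. The next step is to observe that the map $(\rho_{i},s)\mapsto\rho_{i}\circ s\circ\rho_{i}^{\star}$, where $s$ runs over the set $\mathcal{I}_{G}$ of involutions of $G$ viewed as rank-preserving permutation diagrams on the propagating lines, is a bijection onto $\mathcal{I}_{\mathcal{J}}$: self-duality of the sandwich is automatic, and it forces the linking permutation to satisfy $s=s^{-1}$, which is precisely the observation made before \eqref{eq1} that $\pi_{\iota}$ is an involution. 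This already matches dimensions, since $|\mathcal{I}_{\mathcal{J}}|=q_{\mathcal{J}}\,|\mathcal{I}_{G}|=q_{\mathcal{J}}\sum_{N}\dim N=\sum_{N}\dim\Delta_{\mathcal{L}_{\mathcal{J}}}(N)$, the middle equality being the Frobenius--Schur count, valid because $G$ is a product of symmetric groups. I would then use this bijection to identify $\mathbb{C}[\mathcal{I}_{\mathcal{J}}]$ with $\mathbb{C}[\mathcal{L}_{\mathcal{J}}]\otimes_{\mathbb{C}[G]}M(G)$, where $M(G)=\mathbb{C}[\mathcal{I}_{G}]$ is the group model carrying the action \eqref{eq55}; under it the factor $\rho_{i}$ records the left class of $\iota$ and the factor $v_{s}$ records the involution $\pi_{\iota}$.

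The heart of the argument---and the step I expect to be the main obstacle---is to verify that under this identification the operator prescribed by \eqref{eq1} agrees with the genuine $\mathbf{A}$-action on $\mathbb{C}[\mathcal{L}_{\mathcal{J}}]\otimes_{\mathbb{C}[G]}M(G)$. Two scalar compatibilities are at stake. The powers of $\delta$ are governed by the composition cocycle $\tau\mapsto c(\tau,\iota)$; the rank-preserving hypothesis guarantees that no propagating line of $\iota$ is capped off, so this cocycle is additive along products and reproduces exactly the $\delta^{c}$-normalization built into the bimodule action. The signs are more delicate: one must show that $(-1)^{i(\pi_{\tau\circ\iota}\pi_{\iota},\pi_{\iota})}$ realizes the twist in \eqref{eq55}. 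The point is that $\pi_{\tau\circ\iota}\pi_{\iota}\in G$ is exactly the permutation that $\tau$ induces on the propagating lines of $\iota$---the factor $\pi_{\iota}=\pi_{\iota}^{-1}$ compensating for $\iota$ not being idempotent---so the exponent is the inversion statistic attached to conjugating the involution $\pi_{\iota}$, which is precisely what \eqref{eq55} demands. Making this rigorous requires tracking how the reflection $\star$ and the composition $\circ$ interact with the statistic $i(\,\cdot\,,\,\cdot\,)$, and here I would lean on the bookkeeping of \cite{KM} and the semigroup representation theory of \cite{GM,GMS}.

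Granting these compatibilities, the conclusion is immediate. The group model is multiplicity-free, $M(G)\cong\bigoplus_{N}N$, whence $\mathbb{C}[\mathcal{I}_{\mathcal{J}}]\cong\bigoplus_{N}\mathbb{C}[\mathcal{L}_{\mathcal{J}}]\otimes_{\mathbb{C}[G]}N=\bigoplus_{N}\Delta_{\mathcal{L}_{\mathcal{J}}}(N)$, each simple constituent occurring exactly once. Summing over all $\mathcal{J}\in\mathrm{spec}(\mathbf{A})$ and invoking Theorem~\ref{thm1}\eqref{thm1.3}, which in the semi-simple case lists the $\Delta_{\mathcal{L}_{\mathcal{J}}}(N)$ as a complete and irredundant family of simple $\mathbf{A}$-modules, shows that $\mathbb{C}[\mathcal{I}]$ is the multiplicity-free direct sum of all simple $\mathbf{A}$-modules, i.e. a Gelfand model. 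A convenient byproduct of routing the proof through the tensor-product identification is that it makes \eqref{eq1} manifestly a module structure, so no separate verification of associativity is needed.
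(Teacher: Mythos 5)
Your overall architecture is sound and, in its outer layers, matches the paper: the block decomposition $\mathbb{C}[\mathcal{I}]=\bigoplus_{\mathcal{J}}\mathbb{C}[\mathcal{I}_{\mathcal{J}}]$ via Lemma~\ref{lem351}, the dimension count $|\mathcal{I}_{\mathcal{J}}|=q_{\mathcal{J}}\cdot\#\{\text{involutions in }G_{\mathcal{J}}\}=\sum_N \dim\Delta_{\mathcal{L}_{\mathcal{J}}}(N)$ (valid since all Frobenius--Schur indicators of symmetric groups equal $1$), and the final appeal to Theorem~\ref{thm1}\eqref{thm1.3} are all correct and are used by the paper too. But there is a genuine gap at exactly the place you flag as ``the main obstacle'': the sign compatibility is asserted, never proved, and everything rests on it. Two verifications are missing, and they are really the same computation: (i) that your map $(\rho_i,s)\mapsto\rho_i\circ s\circ\rho_i^{\star}$ is $\mathbb{C}[G]$-balanced --- replacing $\rho_i$ by $\rho_i\cdot g$ replaces $s$ by $gsg^{-1}$ and, by \eqref{eq55}, multiplies by $(-1)^{i(g,s)}$, so you must check that the identification respects precisely this sign; and (ii) that the operators \eqref{eq1} intertwine with the genuine $\mathbf{A}$-action on $\mathbb{C}[\mathcal{L}_{\mathcal{J}}]\otimes_{\mathbb{C}[G]}M(G)$. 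Both reduce to a cocycle identity for the statistic $\tau\mapsto\pi_{\tau\circ\iota}\pi_{\iota}$, which is exactly what the paper isolates and proves in Lemma~\ref{lem01}, namely \eqref{eq753}: $\pi_{\tau\circ\rho\circ\iota}\pi_{\iota}=\pi_{\tau\circ(\rho\circ\iota\circ\rho^{\star})}\pi_{\rho\circ\iota\circ\rho^{\star}}\pi_{\rho\circ\iota}\pi_{\iota}$, established by a diagram computation hinging on the idempotency of $(\rho\circ\iota\circ\rho^{\star})\circ(\rho\circ\iota\circ\rho^{\star})$. Saying that $\pi_{\tau\circ\iota}\pi_{\iota}$ ``is exactly the permutation $\tau$ induces on the propagating lines of $\iota$'' is a correct heuristic, not a proof, and leaning on \cite{KM} does not discharge it: there the ambient structure is an inverse semigroup, where the idempotent-compensation issue behind the extra factor $\pi_{\iota}$ does not arise in this form. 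Consequently your closing remark is circular: the ``byproduct'' that \eqref{eq1} is manifestly a module structure presupposes the intertwining, which is the unproved step.

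It is also worth noting that your intended route is genuinely different from, and strictly more demanding than, the paper's. The paper never constructs the global isomorphism $\mathbb{C}[\mathcal{I}_{\mathcal{J}}]\cong\mathbb{C}[\mathcal{L}_{\mathcal{J}}]\otimes_{\mathbb{C}[G]}M(G)$. Instead it verifies associativity of \eqref{eq1} directly (Lemma~\ref{lem01}) and then, in Lemma~\ref{lem02}, compares only the \emph{images} of a single explicit $\star$-self-dual diagram $\tau$ of rank $k$ (identity strands plus a fixed closed-off remainder): the set $H$ of rank-$k$ diagrams $\rho$ with $\tau\circ\rho=\rho\circ\tau=\rho$ spans a copy of $\mathbb{C}[S_k]$, the image of $\tau$ on $\Delta_{\mathcal{L}_{\mathcal{J}}}(N)$ is $N$, the image of $\tau$ on $\mathbb{C}[\mathcal{I}_{\mathcal{J}}]$ is (by comparing \eqref{eq1} with \eqref{eq55}) a Gelfand model for $H$, and semisimplicity together with your same dimension count then pins down all multiplicities. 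This local comparison at one idempotent-like element replaces your global intertwiner. If you prove the cocycle identity \eqref{eq753}, your argument closes and even yields slightly more (an explicit isomorphism, per right-class representative); as written, the heart is missing. A minor further point: rank-preservation in your bijection (that $\rho_i\circ s\circ\rho_i^{\star}$ again has rank $k$) holds because the two glued halves are mirror images, but it needs saying, and is cleaner if you realize $s$ via the right $G$-action on $\mathbb{C}[\mathcal{L}_{\mathcal{J}}]$ rather than as a composed diagram.
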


\subsection{Proof of Theorem~\ref{tmain}}\label{s3.2}

Theorem~\ref{tmain} follows from the following two lemmata:

\begin{lemma}\label{lem01}
Formula \eqref{eq1} defines on $\mathbb{C}[\mathcal{I}]$ 
the structure of an $\mathbf{A}$-module.  
\end{lemma}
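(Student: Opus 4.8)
The plan is to verify directly that formula \eqref{eq1} satisfies the associativity axiom of a module, namely that $(\sigma\tau)\cdot v_\iota = \sigma\cdot(\tau\cdot v_\iota)$ for all diagrams $\sigma,\tau\in\mathtt{A}$ and all $\iota\in\mathcal{I}$, where $\sigma\tau=\delta^{c(\sigma,\tau)}\sigma\circ\tau$ is the product in $\mathbf{A}$. Since $\mathbb{C}[\mathcal{I}]$ is defined on an explicit basis, it suffices to check this one identity; linearity then extends the action to all of $\mathbf{A}$. First I would dispose of the vanishing cases: the right-hand side is zero unless $\mathbf{r}(\tau\circ\iota)=\mathbf{r}(\iota)$ and then also $\mathbf{r}(\sigma\circ(\tau\circ\iota\circ\tau^\star))=\mathbf{r}(\tau\circ\iota\circ\tau^\star)=\mathbf{r}(\iota)$, while the left-hand side is zero unless $\mathbf{r}((\sigma\circ\tau)\circ\iota)=\mathbf{r}(\iota)$. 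I would first show these rank conditions match up, i.e.\ that $\sigma\cdot(\tau\cdot v_\iota)\neq 0$ forces $\mathbf{r}((\sigma\circ\tau)\circ\iota)=\mathbf{r}(\iota)$ and conversely, using that rank is submultiplicative under $\circ$ and that the intermediate diagram $\tau\circ\iota\circ\tau^\star$ is again $\star$-self-dual of the same rank as $\iota$.

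Once both sides are simultaneously nonzero, the identity splits into two independent parts: a \emph{diagram part} and a \emph{scalar part}. For the diagram part I would check that $(\sigma\circ\tau)\circ\iota\circ(\sigma\circ\tau)^\star = \sigma\circ(\tau\circ\iota\circ\tau^\star)\circ\sigma^\star$, which is just associativity of $\circ$ together with the identity $(\sigma\circ\tau)^\star=\tau^\star\circ\sigma^\star$; this guarantees the two sides land on the same basis vector $v_{(\sigma\circ\tau)\circ\iota\circ(\sigma\circ\tau)^\star}$. For the $\delta$-part I would verify that the loop-counting exponents match, i.e.\ $c(\sigma\circ\tau,\iota)+c(\sigma,\tau)=c(\sigma,\tau\circ\iota\circ\tau^\star)+c(\tau,\iota)+(\text{correction})$; this is the standard associativity of the $\delta$-power in the partition algebra $\mathbf{P}_n$ (the algebra $\mathbf{A}$ is by definition a subalgebra), so I would reduce it to the known associativity of multiplication in $\mathbf{P}_n$ restricted to the relevant self-dual configuration, being careful that the self-dual ``doubling'' $\tau\circ\iota\circ\tau^\star$ does not produce double-counted closed loops.

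The main obstacle will be the \emph{sign part}: showing that the signs $(-1)^{i(\cdot,\cdot)}$ composed via \eqref{eq1} obey the same cocycle law as in the symmetric-group model \eqref{eq55}. Concretely, writing the induced permutations $\pi_\tau,\pi_\sigma$ and the ``compensated'' elements $\pi_{\tau\circ\iota}\pi_\iota$, I must prove that
\[
i\bigl(\pi_{(\sigma\circ\tau)\circ\iota}\pi_\iota,\ \pi_\iota\bigr)
\equiv
i\bigl(\pi_{\sigma\circ(\tau\circ\iota\circ\tau^\star)}\pi_{\tau\circ\iota\circ\tau^\star},\ \pi_{\tau\circ\iota\circ\tau^\star}\bigr)
+ i\bigl(\pi_{\tau\circ\iota}\pi_\iota,\ \pi_\iota\bigr)
\pmod 2.
\]
The strategy here is to reduce everything to the group-algebra case by projecting onto propagating lines: the assignment $\tau\mapsto\pi_\tau$ is not a homomorphism on all of $\mathtt{A}$, but on the rank-preserving locus (where the nonzero cases live) the compensation by $\pi_\iota=\pi_\iota^{-1}$ is designed precisely so that the effective action on the $\mathbf{r}(\iota)$ propagating lines of $\iota$ factors through the group $G=G_\mathcal{J}=S_{\mathbf{r}(\iota)}$ (respectively the direct sum in the walled case), as explained in the remark preceding the statement. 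I would make this precise by identifying, for rank-preserving $\tau$, the permutation that $\tau$ induces on the propagating endpoints of $\iota$ with $\pi_{\tau\circ\iota}\pi_\iota$, checking this identification is multiplicative, and then invoking the fact that \eqref{eq55} is already known to define a genuine $S_k$-module (so its sign cocycle identity holds). The $\star$-self-duality of $\iota$ is what ensures the left and right actions on propagating lines are transposes of one another, making $\pi_{\tau\circ\iota\circ\tau^\star}$ the conjugate $\pi_{\tau\circ\iota}\pi_\iota\,\pi_\iota\,(\pi_{\tau\circ\iota}\pi_\iota)^{-1}$ up to the correct bookkeeping, which closes the cocycle computation.
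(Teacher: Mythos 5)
Your proposal is correct and matches the paper's own proof essentially step for step: identical basis vector via associativity of $\circ$ and $(\sigma\circ\tau)^{\star}=\tau^{\star}\circ\sigma^{\star}$, matching $\delta$-exponents via associativity of the partition-algebra product (with the same rank-preservation caveat), and reduction of the sign identity \eqref{eq73} through the known $S_k$-module structure \eqref{eq55} to the multiplicativity of the compensated permutation $\pi_{\tau\circ\iota}\pi_{\iota}$, which is precisely the paper's identity \eqref{eq753}. The only difference is level of detail: the paper carries out the bookkeeping you defer, verifying \eqref{eq753} by tracking min-ordered endpoints of propagating lines and using the idempotency of $(\rho\circ\iota\circ\rho^{\star})\circ(\rho\circ\iota\circ\rho^{\star})$.
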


\begin{proof}
Let $\tau,\rho\in \mathtt{A}$ and $\iota\in \mathcal{I}$. We have to show that
$(\tau\rho)\cdot v_{\iota}=\tau\cdot(\rho\cdot v_{\iota})$. Using associativity of $\circ$ and
involutivity of $\star$ we have
\begin{displaymath}
(\tau\circ \rho)\circ\iota\circ(\tau\circ \rho)^{\star}=
\tau\circ(\rho\circ\iota\circ\rho^{\star})\circ\tau^{\star}=:\alpha.
\end{displaymath}
This implies that both $(\tau\rho)\cdot v_{\iota}$ and $\tau\cdot(\rho\cdot v_{\iota})$ are
scalar multiplies of the same basis vector. 

Now let us check that the corresponding scalars coincide. Both scalars are non-zero if and only if
$\mathbf{r}(\alpha)=\mathbf{r}(\iota)$, so from now on we assume the latter equality.
Both scalars are powers of $\delta$ up to sign.
The powers of $\delta$ agree because of the fact that the correction term $\delta^{c(\tau,\iota)}$ 
does define an associative algebra structure
on $\mathbf{A}$. So, the only thing we are left to check is the fact that
\begin{equation}\label{eq73}
(-1)^{i(\pi_{\tau\circ\rho\circ \iota}\pi_{\iota},\pi_{\iota})} =
(-1)^{i(\pi_{\tau\circ (\rho\circ\iota\circ\rho^{\star})}
\pi_{(\rho\circ\iota\circ\rho^{\star})},\pi_{(\rho\circ\iota\circ\rho^{\star})})} 
(-1)^{i(\pi_{\rho\circ \iota}\pi_{\iota},\pi_{\iota})} .
\end{equation}
The fact that \eqref{eq55} defines a module structure over the symmetric group
reduces verification of \eqref{eq73} to the following identity:
\begin{equation}\label{eq753}
\pi_{\tau\circ\rho\circ\iota}\pi_{\iota}=
\pi_{\tau\circ(\rho\circ\iota\circ\rho^{\star})}\pi_{(\rho\circ\iota\circ\rho^{\star})}
\pi_{\rho\circ\iota}\pi_{\iota}.
\end{equation}
Let $A_1,A_2,\dots,A_{\mathbf{r}(\iota)}$ denote intersections of propagating lines of $\iota$ with
$\underline{n}$ ordered such that for all $1\leq i<j\leq \mathbf{r}(\iota)$ we have
$\min\{s\in A_i\}<\min\{s\in A_j\}$. Let $B_1,B_2,\dots,B_{\mathbf{r}(\iota)}$ denote
intersections of propagating lines of $\rho\circ\iota$ with $\underline{n}'$ ordered in the similar
way and, finally, let $C_1,C_2,\dots,C_{\mathbf{r}(\iota)}$ denote
intersections of propagating lines of $\tau\circ\rho\circ\iota$ with $\underline{n}'$ ordered in 
the similar way. Then, by the definition of $\pi$, the propagating line of $\tau\circ\rho\circ\iota$
containing $A_i$ also contains $C_{\pi_{\tau\circ\rho\circ\iota}(i)}$. Similarly, 
the propagating line of $\rho\circ\iota$
containing $A_i$ also contains $B_{\pi_{\rho\circ\iota}(i)}$ and, finally,
the propagating line of $\tau\circ(\rho\circ\iota\circ\rho^{\star})$
containing the unprimed version of $B_i$ also contains 
$C_{\pi_{\tau\circ(\rho\circ\iota\circ\rho^{\star})}(i)}$.
This reduces the equality \eqref{eq753} to the following equality:
\begin{displaymath}
\tau\circ\rho\circ\iota\circ\iota=
\tau\circ(\rho\circ\iota\circ\rho^{\star})\circ(\rho\circ\iota\circ\rho^{\star})\circ
\rho\circ\iota\circ\iota.
\end{displaymath}
This last equality follows from 
\begin{displaymath}
(\rho\circ\iota\circ\rho^{\star})\circ(\rho\circ\iota\circ\rho^{\star})\circ \rho\circ\iota=
\rho\circ\iota,
\end{displaymath}
which is proved by comparing the left and right parts on both the left and the right hand sides
of the equality since both these sides have the same rank and the element 
$(\rho\circ\iota\circ\rho^{\star})\circ(\rho\circ\iota\circ\rho^{\star})$ is idempotent
(the latter is equivalent to the assertion that  $\pi$ sends $\star$-self-dual elements to
involutions). This completes the proof of the lemma.
\end{proof}

\begin{lemma}\label{lem02}
If $\mathbf{A}$ is semi-simple, then the $\mathbf{A}$-module $\mathbb{C}[\mathcal{I}]$
is a multiplicity free direct sum of all Specht $\mathbf{A}$-modules.
\end{lemma}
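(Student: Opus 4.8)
The plan is to reduce the statement, one $J$-class at a time, to the known combinatorial Gelfand model of the symmetric group from Section~\ref{s1}. First I would record that the action \eqref{eq1} preserves rank: since $\mathbf{r}(\tau\circ\iota)\le\mathbf{r}(\iota)$ always, the first case of \eqref{eq1} occurs exactly when $\tau\circ\iota$ has the same rank as $\iota$, and then $\tau\circ\iota\circ\tau^{\star}$ has that same rank (and, in the walled cases, the same $a$-rank, by Lemma~\ref{lem351}). Hence, writing $\mathcal{I}_{\mathcal{J}}$ for the set of $\star$-self-dual diagrams lying in a fixed $J$-class $\mathcal{J}$, the space $\mathbb{C}[\mathcal{I}_{\mathcal{J}}]$ is an $\mathbf{A}$-submodule and $\mathbb{C}[\mathcal{I}]=\bigoplus_{\mathcal{J}\in\mathrm{spec}(\mathbf{A})}\mathbb{C}[\mathcal{I}_{\mathcal{J}}]$. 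Letting $M_{G}$ denote the Gelfand model of $G=G_{\mathcal{J}}$ from Section~\ref{s1} (so $M_{G}=\bigoplus_{N}N$, the multiplicity-free sum of all simple $G$-modules, with basis $\{w_{g}\}$ indexed by the involutions $g$ of $G$), the goal becomes the isomorphism of $\mathbf{A}$-modules
\begin{equation*}
\mathbb{C}[\mathcal{I}_{\mathcal{J}}]\;\cong\;\mathbb{C}[\mathcal{L}_{\mathcal{J}}]\otimes_{\mathbb{C}[G]}M_{G}.
\end{equation*}
Since $\mathbb{C}[\mathcal{L}_{\mathcal{J}}]\otimes_{\mathbb{C}[G]}N=\Delta_{\mathcal{L}_{\mathcal{J}}}(N)$ and tensoring is additive, the right-hand side equals $\bigoplus_{N}\Delta_{\mathcal{L}_{\mathcal{J}}}(N)$; this is multiplicity free because $M_{G}$ is and because distinct $N$ yield non-isomorphic simple $\mathbf{A}$-modules by Theorem~\ref{thm1}\eqref{thm1.3}. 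Summing over $\mathcal{J}$ then exhibits each Specht module exactly once, which is the assertion.

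Next I would set up egg-box coordinates on $\mathcal{J}$. Ordering the left classes $\mathcal{L}^{(1)},\dots,\mathcal{L}^{(q)}$ and right classes $\mathcal{R}^{(1)},\dots,\mathcal{R}^{(q)}$ (with $q=q_{\mathcal{J}}$) so that $(\mathcal{L}^{(i)})^{\star}=\mathcal{R}^{(i)}$, every diagram of $\mathcal{J}$ is determined by its left class, its right class, and the matching $g\in G$ of the ends of its propagating lines. For a $\star$-compatible choice of the group identifications, $\star$ sends the diagram with coordinates $(i,j,g)$ to the one with coordinates $(j,i,g^{-1})$, so a diagram is $\star$-self-dual precisely when $i=j$ and $g=g^{-1}$. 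Thus $\mathcal{I}_{\mathcal{J}}$ is parametrized by pairs $(i,g)$ with $1\le i\le q$ and $g$ an involution of $G$, and for the corresponding diagram $\iota=\iota(i,g)$ one has $g=\pi_{\iota}$. On the other side, $\mathbb{C}[\mathcal{L}_{\mathcal{J}}]$ is free over $\mathbb{C}[G]$ with basis $d_{1},\dots,d_{q}$, where $d_{i}\in\mathcal{L}_{\mathcal{J}}$ has right class $\mathcal{R}^{(i)}$, so $\mathbb{C}[\mathcal{L}_{\mathcal{J}}]\otimes_{\mathbb{C}[G]}M_{G}$ has basis $\{d_{i}\otimes w_{g}\}$ with $(i,g)$ ranging over the same index set. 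Both sides therefore carry bases naturally indexed by the same data.

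I would then define $\Phi\colon\mathbb{C}[\mathcal{I}_{\mathcal{J}}]\to\mathbb{C}[\mathcal{L}_{\mathcal{J}}]\otimes_{\mathbb{C}[G]}M_{G}$ by $\Phi(v_{\iota(i,g)})=d_{i}\otimes w_{g}$ and verify that it intertwines the action of every $\tau\in\mathtt{A}$. The point that makes the powers of $\delta$ match is that $\iota(i,g)$ and $d_{i}$ share their common right class $\mathcal{R}^{(i)}$, hence the same restriction to $\underline{n}'$; since the loop count $c(\tau,-)$ depends only on $\tau$ and the $\underline{n}'$-data of its second argument, $c(\tau,\iota(i,g))=c(\tau,d_{i})$, and the two rank (vanishing) conditions coincide as well. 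Writing $\tau\cdot d_{i}=\delta^{c(\tau,d_{i})}\,d_{i'}h$ in the bimodule, where $h\in G$ records the induced matching, and passing $h$ across the tensor product, the remaining content of $\Phi(\tau\cdot v_{\iota})=\tau\cdot\Phi(v_{\iota})$ is the identity
\begin{equation*}
h\cdot w_{g}=(-1)^{i(h,g)}w_{hgh^{-1}},\qquad h=\pi_{\tau\circ\iota}\pi_{\iota},\qquad g=\pi_{\iota},
\end{equation*}
together with $\pi_{\tau\circ\iota\circ\tau^{\star}}=h\,\pi_{\iota}\,h^{-1}$ and the matching of the class index $i'$. These are exactly the diagrammatic facts already isolated in the proof of Lemma~\ref{lem01} (the reduction through \eqref{eq753}), combined with the fact that \eqref{eq55} defines the Gelfand model of the symmetric group. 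Hence $\Phi$ is an isomorphism of $\mathbf{A}$-modules, and the lemma follows.

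The main obstacle is precisely this intertwining verification: one must track, uniformly across all ten algebras, how left multiplication by $\tau$ together with right multiplication by $\tau^{\star}$ transforms the coordinates $(i,g)$ of a self-dual diagram, and confirm that the induced action on the matching is conjugation by $h=\pi_{\tau\circ\iota}\pi_{\iota}$ carrying exactly the sign $(-1)^{i(h,\pi_{\iota})}$ prescribed by \eqref{eq1}. This is the same computation that underlies Lemma~\ref{lem01}, so the real work is organizational: fixing the $\star$-compatible indexing $(\mathcal{L}^{(i)})^{\star}=\mathcal{R}^{(i)}$ and checking $c(\tau,\iota)=c(\tau,d_{i})$ and $\pi_{\tau\circ\iota\circ\tau^{\star}}=h\pi_{\iota}h^{-1}$. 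The dimension bookkeeping is then automatic, since the number of involutions in a product of symmetric groups equals $\sum_{N}\dim N=\dim M_{G}$, whence $|\mathcal{I}_{\mathcal{J}}|=q_{\mathcal{J}}\dim M_{G}=\sum_{N}\dim\Delta_{\mathcal{L}_{\mathcal{J}}}(N)$, providing a consistency check on $\Phi$.
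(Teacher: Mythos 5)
Your argument is correct, but it takes a genuinely different route from the paper's. You prove the stronger, explicit statement $\mathbb{C}[\mathcal{I}_{\mathcal{J}}]\cong\mathbb{C}[\mathcal{L}_{\mathcal{J}}]\otimes_{\mathbb{C}[G]}M_{G}$ (with $M_G$ your Gelfand model for $G=G_{\mathcal{J}}$) by an egg-box parametrization and a basis-level intertwiner, uniformly across all ten algebras. The paper instead argues per $J$-class by idempotent truncation: after disposing of the Temperley--Lieb-type algebras and the cases $k=\mathbf{r}(\mathcal{J})\in\{0,1\}$ by a pure dimension count (namely $\dim\mathbb{C}[\mathcal{I}_{\mathcal{J}}]=q_{\mathcal{J}}\cdot\#\{\text{involutions in }G_{\mathcal{J}}\}$, which is exactly the egg-box fact you establish), it picks for each remaining algebra an explicit $\star$-self-dual idempotent-like diagram $\tau$ of rank $k$, shows that the span of $H=\{\rho:\tau\circ\rho=\rho=\rho\circ\tau\}$ is $\mathbb{C}[S_k]$ up to a power of $\delta$, identifies the image of $\tau$ on $\Delta_{\mathcal{L}_{\mathcal{J}}}(N)$ with $N$ and the image of $\tau$ on $\mathbb{C}[\mathcal{I}_{\mathcal{J}}]$ (by comparing \eqref{eq1} with \eqref{eq55}) with the Gelfand model for $S_k$, and then lets semisimplicity force each multiplicity to equal one, the walled cases being left to the reader. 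What your approach buys: no case analysis, and no use of semisimplicity in constructing the isomorphism itself --- semisimplicity enters only through Theorem~\ref{thm1}\eqref{thm1.3} to know that the $\Delta_{\mathcal{L}_{\mathcal{J}}}(N)$ exhaust the simples --- so your map would in fact prove the assertion of Remark~\ref{rem341} that $\mathbb{C}[\mathcal{I}]$ is a direct sum of Specht modules even in the non-semisimple case, which the paper states without proof. What it costs: the full intertwining verification, i.e.\ $c(\tau,\iota(i,g))=c(\tau,d_i)$ together with the matching vanishing conditions, the identification $h=\pi_{\tau\circ\iota}\pi_{\iota}$ of the bimodule cocycle (cleanest if you normalize the section by choosing $d_i$ with $\pi_{d_i}=e$), and the conjugation identity $\pi_{\tau\circ\iota\circ\tau^{\star}}=h\pi_{\iota}h^{-1}$; you correctly locate these in the block-tracking of Lemma~\ref{lem01}, though note that the conjugation identity is only implicit there (it is needed for the reduction of \eqref{eq73} to \eqref{eq753} but is never displayed), so your write-up would have to make it explicit. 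The paper's truncation sidesteps this bookkeeping by comparing the two actions only on the image of a single idempotent, where \eqref{eq1} visibly restricts to \eqref{eq55}, at the price of a shorter but less uniform and strictly semisimple argument.
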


\begin{proof}
For $\mathcal{J}\in\mathrm{spec}(\mathbf{A})$ let $\mathcal{I}_{\mathcal{J}}$ denote the set of elements of 
$\mathcal{I}$ which are in $\mathcal{J}$. Set $k:=\mathbf{r}(\mathcal{J})$. 
From the definition we have that $\mathbb{C}[\mathcal{I}]$ decomposes
into a direct sum of $\mathbb{C}[\mathcal{I}_{\mathcal{J}}]$'th for ${\mathcal{J}}\in\mathrm{spec}(\mathbf{A})$. 
So, it is enough to show that 
$\mathbb{C}[\mathcal{I}_{\mathcal{J}}]$ is a multiplicity free direct sum of Specht modules
$\Delta_{\mathcal{L}_{\mathcal{J}}}(N)$ where $N$ runs through the set of isomorphism classes of
simple $G_{\mathcal{J}}$-modules. 
Note that, by construction, the dimension of $\mathbb{C}[\mathcal{I}_{\mathcal{J}}]$ equals $q_{\mathcal{J}}$ times the number
of involutions in $G_{\mathcal{J}}$ and hence  agrees with the dimension of the 
multiplicity free direct sum of Specht modules as above. Hence if $\mathbf{A}$ is any of the Temperley-Lieb
type algebras, then the claim is obvious. Similarly in the case $k=0,1$ for any $\mathbf{A}$.
So we are left with the cases $\mathbf{A}\in\{\mathbf{P}_n,\mathbf{B}_n,\mathcal{P}\mathbf{B}_n,
\mathcal{W}_{a,b}\mathbf{B}_n,\mathcal{W}_{a,b}\mathcal{P}\mathbf{B}_n\}$ and $k>1$.

Assume first that $\mathbf{A}=\mathbf{P}_n$. Consider the following element:
\begin{displaymath}
\tau=\{1,1'\}\cup\{2,2'\}\cup \dots\cup\{k-1,(k-1)'\}\cup\{k,k+1,\dots,n,k',(k+1)',\dots,n'\}
\end{displaymath}
(see an example in Figure~\ref{fig4}).
We have $\tau=\tau^{\star}$ and $\tau\circ\tau=\tau=\tau\tau$. Let $H$ denote the set of all
diagrams $\rho$ of rank $k$ satisfying $\tau\circ\rho=\rho$ and $\rho\circ\tau=\rho$. Then
$\rho_1\circ\rho_2=\rho_1\rho_2$ for all $\rho_1,\rho_2\in H$ and hence the linear span of $H$
is isomorphic to the group algebra of the symmetric group $S_k$ and $\tau$ is the identity element in $H$.
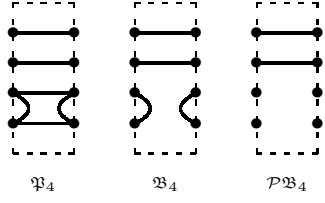
\begin{figure}
\special{em:linewidth 0.4pt} \unitlength 0.80mm
\begin{picture}(60.00,40.00)
\put(05.00,15.00){\makebox(0,0)[cc]{$\bullet$}}
\put(05.00,20.00){\makebox(0,0)[cc]{$\bullet$}}
\put(05.00,25.00){\makebox(0,0)[cc]{$\bullet$}}
\put(05.00,30.00){\makebox(0,0)[cc]{$\bullet$}}
\put(15.00,15.00){\makebox(0,0)[cc]{$\bullet$}}
\put(15.00,20.00){\makebox(0,0)[cc]{$\bullet$}}
\put(15.00,25.00){\makebox(0,0)[cc]{$\bullet$}}
\put(15.00,30.00){\makebox(0,0)[cc]{$\bullet$}}
\put(25.00,15.00){\makebox(0,0)[cc]{$\bullet$}}
\put(25.00,20.00){\makebox(0,0)[cc]{$\bullet$}}
\put(25.00,25.00){\makebox(0,0)[cc]{$\bullet$}}
\put(25.00,30.00){\makebox(0,0)[cc]{$\bullet$}}
\put(35.00,15.00){\makebox(0,0)[cc]{$\bullet$}}
\put(35.00,20.00){\makebox(0,0)[cc]{$\bullet$}}
\put(35.00,25.00){\makebox(0,0)[cc]{$\bullet$}}
\put(35.00,30.00){\makebox(0,0)[cc]{$\bullet$}}
\put(45.00,15.00){\makebox(0,0)[cc]{$\bullet$}}
\put(45.00,20.00){\makebox(0,0)[cc]{$\bullet$}}
\put(45.00,25.00){\makebox(0,0)[cc]{$\bullet$}}
\put(45.00,30.00){\makebox(0,0)[cc]{$\bullet$}}
\put(55.00,15.00){\makebox(0,0)[cc]{$\bullet$}}
\put(55.00,20.00){\makebox(0,0)[cc]{$\bullet$}}
\put(55.00,25.00){\makebox(0,0)[cc]{$\bullet$}}
\put(55.00,30.00){\makebox(0,0)[cc]{$\bullet$}}
\dashline{1}(05.00,10.00)(05.00,35.00)
\dashline{1}(15.00,35.00)(05.00,35.00)
\dashline{1}(15.00,35.00)(15.00,10.00)
\dashline{1}(05.00,10.00)(15.00,10.00)
\dashline{1}(25.00,10.00)(25.00,35.00)
\dashline{1}(35.00,35.00)(25.00,35.00)
\dashline{1}(35.00,35.00)(35.00,10.00)
\dashline{1}(25.00,10.00)(35.00,10.00)
\dashline{1}(45.00,10.00)(45.00,35.00)
\dashline{1}(55.00,35.00)(45.00,35.00)
\dashline{1}(55.00,35.00)(55.00,10.00)
\dashline{1}(45.00,10.00)(55.00,10.00)
\thicklines
\drawline(05.00,20.00)(15.00,20.00)
\drawline(05.00,15.00)(15.00,15.00)
\drawline(05.00,30.00)(15.00,30.00)
\drawline(05.00,25.00)(15.00,25.00)
\drawline(25.00,30.00)(35.00,30.00)
\drawline(25.00,25.00)(35.00,25.00)
\drawline(45.00,30.00)(55.00,30.00)
\drawline(45.00,25.00)(55.00,25.00)
\qbezier(5,15)(10,17.50)(5,20)
\qbezier(15,15)(10,17.50)(15,20)
\qbezier(25,15)(30,17.50)(25,20)
\qbezier(35,15)(30,17.50)(35,20)
\put(10,5){\makebox(0,0)[cc]{\tiny $\mathfrak{P}_4$}}
\put(30,5){\makebox(0,0)[cc]{\tiny $\mathfrak{B}_4$}}
\put(50,5){\makebox(0,0)[cc]{\tiny $\mathcal{P}\mathfrak{B}_4$}}
\end{picture}
\caption{The element $\tau$ from the proof of Lemma~\ref{lem02} for $n=4$ and $k=2$}
\label{fig4}
\end{figure}

Without loss of generality we may assume that $H\subset\mathcal{L}_{\mathcal{J}}$. In this case from the definition 
of $\Delta_{\mathcal{L}_{\mathcal{J}}}(N)$ we may identify the image of the linear operator defined by 
$\tau$ acting on $\Delta_{\mathcal{L}_{\mathcal{J}}}(N)$ with $N$. Now, comparing \eqref{eq1}
and \eqref{eq55} we see that the image of the linear operator defined by 
$\tau$ acting on $\mathbb{C}[\mathcal{I}_{\mathcal{J}}]$ can be identified with
a Gelfand model for $H$. It follows that $\mathbb{C}[\mathcal{I}_{\mathcal{J}}]$ is a multiplicity free direct sum of 
Specht modules $\Delta_{\mathcal{L}_{\mathcal{J}}}(N)$ where $N$ corresponds to 
$\lambda\vdash k$.

If $\mathbf{A}=\mathbf{B}_n$, then we consider the following diagram (see an example in Figure~\ref{fig4}):
\begin{displaymath}
\tau=\{1,1'\}\cup\dots\cup\{k,k'\}\cup\{k+1,k+2\}\cup
\{(k+1)',(k+2)'\}\cup\dots\cup\{n-1,n\}\cup\{(n-1)',n'\}.
\end{displaymath}
Let $l:=(n-k)/2$ (note that $n$ and $k$ have the same parity). Let $H$ denote the set of all
diagrams $\rho$ of rank $k$ satisfying $\tau\circ\rho=\rho$ and $\rho\circ\tau=\rho$. 
Then the linear span of $H$ is again isomorphic to the group algebra of the symmetric group $S_k$ 
(acting on $\{1,2,\dots,k\}$) by sending a diagram to $\delta^{l}$ times the corresponding permutation
in $S_k$ (recall that $\delta\neq 0$ by our assumptions). 
Now the proof is completed in the same way as in the case $\mathbf{A}=\mathbf{P}_n$.

Finally, for $\mathbf{A}=\mathcal{P}\mathbf{B}_n$ we consider the following element:
\begin{displaymath}
\tau=\{1,1'\}\cup\dots\cup\{k,k'\}\cup\{k+1\}\cup
\{(k+1)'\}\cup\dots\cup\{n\}\cup\{n'\}
\end{displaymath}
(again, see an example in Figure~\ref{fig4}). Let $H$ denote the set of all
diagrams $\rho$ of rank $k$ satisfying $\tau\circ\rho=\rho$ and $\rho\circ\tau=\rho$. 
Then the linear span of $H$ is again isomorphic to the group algebra of the symmetric group $S_k$ 
(acting on $\{1,2,\dots,k\}$) by sending a diagram to $\delta^{n-k}$ times the corresponding permutation
in $S_k$. Now the proof is completed in the same way as in the case $\mathbf{A}=\mathbf{P}_n$.

The cases $\mathbf{A}=\mathcal{W}_{a,b}\mathbf{B}_n,\mathcal{W}_{a,b}\mathcal{P}\mathbf{B}_n$
are dealt with similarly and left to the reader.
\end{proof}

\begin{remark}\label{rem341}
{\rm  
The $\mathbf{A}$-module $\mathbb{C}[\mathcal{I}]$ makes also sense in the non-semisimple case in which
it is just isomorphic to a direct sum of Specht $\mathbf{A}$-modules. The latter are the cell modules
with respect to the cellular structure of $\mathbf{A}$. Maybe this is a sensible notion
of Gelfand model for a cellular algebras?
}
\end{remark}

\subsection{Example}\label{s3.3}

Let $\mathbf{A}=\mathcal{P}\mathbf{B}_2$. The set $\mathcal{I}$ consists of the six 
diagrams shown in Figure~\ref{fig3}. The algebra $\mathbf{A}$ is generated by 
$\alpha:=\iota_2$, $\beta:=\iota_3$ and $\gamma:=\iota_5$. The action of these generators 
(denoted by $\pi$ in the table below) on the basis elements of $\mathcal{I}$ is given by:
\begin{displaymath}
\begin{array}{c||c|c|c|c|c|c}
\pi\setminus\iota & \iota_1&\iota_2&\iota_3&\iota_4&\iota_5&\iota_6\\ 
\hline\hline
\alpha&\iota_1&-\iota_2&\iota_4&\iota_3&\iota_5&\iota_6\\
\hline
\beta&0&0&\iota_3&0&\iota_6&\delta\iota_6\\
\hline
\gamma&0&0&0&0&\delta\iota_5&\delta\iota_5\\
\end{array}
\end{displaymath}
The module $\mathbb{C}[\mathcal{I}]$ is a direct sum of four simple $\mathbf{A}$-modules, namely,
the linear spans of $\{\iota_1\}$, $\{\iota_2\}$, $\{\iota_3,\iota_4\}$ and $\{\iota_5,\iota_6\}$.
\begin{figure}
\special{em:linewidth 0.4pt} \unitlength 0.80mm
\begin{picture}(120.00,25.00)
\put(05.00,10.00){\makebox(0,0)[cc]{$\bullet$}}
\put(05.00,15.00){\makebox(0,0)[cc]{$\bullet$}}
\put(15.00,10.00){\makebox(0,0)[cc]{$\bullet$}}
\put(15.00,15.00){\makebox(0,0)[cc]{$\bullet$}}
\put(25.00,10.00){\makebox(0,0)[cc]{$\bullet$}}
\put(25.00,15.00){\makebox(0,0)[cc]{$\bullet$}}
\put(35.00,10.00){\makebox(0,0)[cc]{$\bullet$}}
\put(35.00,15.00){\makebox(0,0)[cc]{$\bullet$}}
\put(45.00,10.00){\makebox(0,0)[cc]{$\bullet$}}
\put(45.00,15.00){\makebox(0,0)[cc]{$\bullet$}}
\put(55.00,10.00){\makebox(0,0)[cc]{$\bullet$}}
\put(55.00,15.00){\makebox(0,0)[cc]{$\bullet$}}
\put(65.00,10.00){\makebox(0,0)[cc]{$\bullet$}}
\put(65.00,15.00){\makebox(0,0)[cc]{$\bullet$}}
\put(75.00,10.00){\makebox(0,0)[cc]{$\bullet$}}
\put(75.00,15.00){\makebox(0,0)[cc]{$\bullet$}}
\put(85.00,10.00){\makebox(0,0)[cc]{$\bullet$}}
\put(85.00,15.00){\makebox(0,0)[cc]{$\bullet$}}
\put(95.00,10.00){\makebox(0,0)[cc]{$\bullet$}}
\put(95.00,15.00){\makebox(0,0)[cc]{$\bullet$}}
\put(105.00,10.00){\makebox(0,0)[cc]{$\bullet$}}
\put(105.00,15.00){\makebox(0,0)[cc]{$\bullet$}}
\put(115.00,10.00){\makebox(0,0)[cc]{$\bullet$}}
\put(115.00,15.00){\makebox(0,0)[cc]{$\bullet$}}
\dashline{1}(05.00,05.00)(05.00,20.00)
\dashline{1}(15.00,20.00)(05.00,20.00)
\dashline{1}(15.00,20.00)(15.00,05.00)
\dashline{1}(05.00,05.00)(15.00,05.00)
\dashline{1}(25.00,05.00)(25.00,20.00)
\dashline{1}(35.00,20.00)(25.00,20.00)
\dashline{1}(35.00,20.00)(35.00,05.00)
\dashline{1}(25.00,05.00)(35.00,05.00)
\dashline{1}(45.00,05.00)(45.00,20.00)
\dashline{1}(55.00,20.00)(45.00,20.00)
\dashline{1}(55.00,20.00)(55.00,05.00)
\dashline{1}(45.00,05.00)(55.00,05.00)
\dashline{1}(65.00,05.00)(65.00,20.00)
\dashline{1}(75.00,20.00)(65.00,20.00)
\dashline{1}(75.00,20.00)(75.00,05.00)
\dashline{1}(65.00,05.00)(75.00,05.00)
\dashline{1}(85.00,05.00)(85.00,20.00)
\dashline{1}(95.00,20.00)(85.00,20.00)
\dashline{1}(95.00,20.00)(95.00,05.00)
\dashline{1}(85.00,05.00)(95.00,05.00)
\dashline{1}(105.00,05.00)(105.00,20.00)
\dashline{1}(115.00,20.00)(105.00,20.00)
\dashline{1}(115.00,20.00)(115.00,05.00)
\dashline{1}(105.00,05.00)(115.00,05.00)
\thicklines
\drawline(05.00,10.00)(15.00,10.00)
\drawline(05.00,15.00)(15.00,15.00)
\drawline(25.00,10.00)(35.00,15.00)
\drawline(25.00,15.00)(35.00,10.00)
\drawline(45.00,10.00)(55.00,10.00)
\drawline(65.00,15.00)(75.00,15.00)
\qbezier(85,15)(90,12.50)(85,10)
\qbezier(95,15)(90,12.50)(95,10)
\put(10,1){\makebox(0,0)[cc]{\tiny $\iota_1$}}
\put(30,1){\makebox(0,0)[cc]{\tiny $\iota_2$}}
\put(50,1){\makebox(0,0)[cc]{\tiny $\iota_3$}}
\put(70,1){\makebox(0,0)[cc]{\tiny $\iota_4$}}
\put(90,1){\makebox(0,0)[cc]{\tiny $\iota_5$}}
\put(110,1){\makebox(0,0)[cc]{\tiny $\iota_6$}}
\end{picture}
\caption{$\star$-self-dual diagrams for $\mathcal{P}\mathbf{B}_2$}
\label{fig3}
\end{figure}
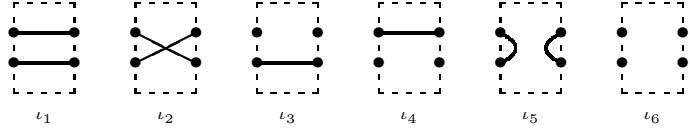

\vspace{0.5cm}

\noindent
V.M.:\hspace{4mm} Department of Mathematics, Uppsala University, SE 47106,
Uppsala, SWEDEN, e-mail: {\tt mazor\symbol{64}math.uu.se}
\end{document}